\documentclass[11pt,letterpaper]{amsart}

\usepackage{amsmath, amssymb, amsthm}
\usepackage{enumitem}
\usepackage{hyperref}
\usepackage{xcolor}
\usepackage{comment}
\usepackage{todonotes}
\theoremstyle{plain}
\newtheorem{theorem}{Theorem}[section]
\newtheorem{lemma}[theorem]{Lemma}
\newtheorem{proposition}[theorem]{Proposition}
\newtheorem{corollary}[theorem]{Corollary}

\theoremstyle{definition}
\newtheorem{definition}[theorem]{Definition}

\newtheorem{remark}[theorem]{Remark}

\newcommand{\C}{\mathbb{C}}
\newcommand{\CP}{\mathbb{CP}}
\newcommand{\R}{\mathbb{R}}

\newcommand{\mG}{\mathcal{G}}
\newcommand{\mH}{\mathcal{H}}

\newcommand{\mL}{\mathcal{L}}
\newcommand{\mR}{\mathcal{R}}

\newcommand{\inn}{\langle \cdot , \cdot \rangle}

\DeclareMathOperator{\Id}{Id}
\DeclareMathOperator{\tr}{tr}
\DeclareMathOperator{\End}{End}

\DeclareMathOperator{\codim}{codim}

\title[Hirzebruch form and logarithmic connections]{The Hirzebruch quadratic form of a hyperplane arrangement and flat logarithmic connections}

\date{}

\author{Martin de Borbon}
\address{Department of Mathematical Sciences, Loughborough University, Schofield Building, Loughborough LE11 3TU, United Kingdom}
\email{m.de-borbon@lboro.ac.uk}

\author{Dmitri Panov}
\address{Department of Mathematics, King's College London, Strand, London WC2R 2LS, United Kingdom}
\email{dmitri.panov@kcl.ac.uk}

\subjclass[2020]{Primary 14N20, 53D20; Secondary 14L24, 53C07, 32S22}
\keywords{Hyperplane arrangements, line arrangements, Miyaoka--Yau inequality, logarithmic connections, moment maps, tight frames}


\begin{document}

\maketitle

\begin{abstract}
We prove that the Hirzebruch quadratic form of a complex hyperplane arrangement is non-positive on the set of stable weights, and we identify the zero locus within this set with flat logarithmic connections of a distinguished type. The proof uses Kempf--Ness and the frame-potential inequality.
\end{abstract}

\section{Introduction}

A complex hyperplane arrangement carries a natural quadratic form on its space of weights. For line arrangements in \(\CP^2\), this form appears in
Hirzebruch's work on the Miyaoka--Yau inequality and on the construction of ball quotients \cite{hirzebruch}; in higher dimensions it was introduced in \cite{borbonpanov}. This paper places the Hirzebruch quadratic form in the study of flat logarithmic connections.

Hirzebruch's construction of ball quotients from line arrangements uses the Miyaoka--Yau inequality together with the Aubin--Yau existence theorem for K\"ahler--Einstein metrics. The complex-hyperbolic structure of a ball quotient is encoded by a flat connection. The result below gives an arrangement-level counterpart: in
the equality case, one obtains a flat logarithmic connection directly from the weighted arrangement, by a finite-dimensional moment-map argument.

This gives a novel way of uncovering flat logarithmic connections from stability. The familiar Kobayashi--Hitchin correspondence starts from stable
vector bundles, or stable Higgs bundles, and produces Hermitian--Einstein or harmonic metrics which, under appropriate topological vanishing assumptions, give flat connections. In its differential-geometric form, this involves solving a PDE. In the present paper, the stable object is instead a weighted configuration of points in projective space. The vanishing of the Hirzebruch quadratic form is the additional
constraint which turns the moment-map solution into a flat logarithmic connection, while bypassing PDE methods.

\subsection{Main result}
Let \(\mathcal H=\{H_1,\ldots,H_n\}\) be a
hyperplane arrangement in \(\CP^d\), with \(d\geq 1\), and let
\(Q:\R^n\to\R\) be its Hirzebruch quadratic form, see
Definition~\ref{def:hqf}. For a positive weight vector
\(a=(a_1,\ldots,a_n)\in\R^n_{>0}\), we write \((\mathcal H,a)\) for the corresponding weighted arrangement. 

\medskip

\noindent\textbf{Theorem~A.}
\emph{Suppose that \((\mathcal H,a)\) is stable. Then}
\begin{equation}\label{eq:hirineq}
Q(a)\leq 0 \,.    
\end{equation}
\emph{Moreover, equality holds if and only if there exists a Hermitian metric \(\inn\) on \(\C^{d+1}\), unique up to scale, such that the logarithmic connection on \(T\C^{d+1}\), written in the standard trivialization,}
\begin{equation}\label{eq:connection}
\nabla=d-\sum_{i=1}^n A_i\frac{d\ell_i}{\ell_i}
\end{equation}
\emph{is flat. Here \(\ell_i\) is a defining linear equation for \(H_i\), and \(A_i=a_iP_i\), where \(P_i\) denotes the orthogonal projection onto \(H_i^\perp\) with respect to \(\inn\).}

\medskip

The inequality \eqref{eq:hirineq} is one of the main results of \cite{borbonpanov}, extending the case \(d=2\) previously proved in \cite{panov},  both using the theory of parabolic bundles \cite{mochizuki}.
Here we give a shorter, self-contained proof; the main new contribution is an equality characterization in terms of flat logarithmic connections determined by a distinguished Hermitian metric, unique up to scale.
In the terminology of Couwenberg--Heckman--Looijenga \cite{chl}, the connections appearing in Theorem~A are called \emph{Dunkl connections}.
Their work develops this class of logarithmic connections and gives important geometric examples, especially those associated with complex reflection arrangements. By contrast, Theorem~A yields a combinatorial criterion for the existence of such connections for an arbitrary hyperplane arrangement.

The proof of Theorem~A is based on the Kempf--Ness theorem for weighted configurations of points in \(\CP^d\). Stability gives a balanced Hermitian metric, or equivalently a Hermitian metric such that the weighted normal vectors to the hyperplanes form a tight frame. The theorem then follows from the frame-potential inequality \cite[Theorem 6.1]{waldron} applied to the codimension-two localizations of \(\mH\).
As a result, the Hermitian metric in Theorem~A has a moment-map interpretation: it
is the zero of a moment map for the diagonal action of \(SL(d+1,\C)\) on \((\mathbb{CP}^{d})^n\).  
This places the proof in the same moment-map framework as other work on point configurations in projective space, such as Hughston--Salamon \cite{hughstonsalamon}.

\begin{remark}
In dimension \(d=1\), the Hirzebruch quadratic form is identically zero, so Theorem~A reduces to the existence of a Hermitian metric for which the weighted orthogonal projections \(a_iP_i\) add up to a scalar multiple of the identity. 
This case was proved in \cite[Proposition 2.3]{dunklC2}: such a metric exists precisely when the weighted configuration is stable. \footnote{The main result of \cite{dunklC2} shows that, in the generic case, Dunkl connections on \(T\C^2\) do not preserve non-zero Hermitian forms.}
\end{remark}

\subsection{Notation}
Throughout this paper, \(\mathcal H=\{H_1,\ldots,H_n\}\) denotes a hyperplane arrangement in \(\CP^d\), i.e. a finite set of pairwise distinct complex hyperplanes. We use the same symbol \(H_i\) for the corresponding linear hyperplane in \(\C^{d+1}\). When weights are present, we write \(a=(a_1,\ldots,a_n)\in\R^n_{>0}\) and denote by \((\mathcal H,a)\) the corresponding weighted arrangement.

\subsection{Outline}
In Section~\ref{sec:background} we recall the necessary background. In Section~\ref{sec:proof} we prove Theorem~A. In Section~\ref{sec:applications} we give three applications: a higher-dimensional analogue of Langer's bound on multiplicities of line arrangements \cite[Proposition 11.3.1]{langer}, a matroid-polytope criterion for the existence of flat logarithmic connections, and a localization result used in our construction of polyhedral K\"ahler metrics \cite{borbonpanov-pk}.

\section{Background}\label{sec:background}

\subsection{Hyperplane arrangements}\label{sec:ha}

Let \(\mathcal L\) be the intersection poset of \(\mH\). The elements of \(\mathcal L\) are the projective linear subspaces \(L\subset \CP^d\) obtained by intersecting members of \(\mathcal H\). The rank of \(L\in\mathcal L\) is
\[
r(L):=\codim L.
\]
The arrangement \(\mathcal H\) is called essential if \(\emptyset\in\mathcal L\), and we take the convention that \(\codim\emptyset=d+1\). The localization of \(\mathcal H\) at \(L\in\mathcal L\) is
\[
\mathcal H_L=\{H\in\mathcal H\mid L\subset H\}.
\]
The multiplicity of \(L\) is
\[
m_L:=|\mathcal H_L|.
\]

The arrangement \(\mathcal H\) is called reducible if there is a pair of proper subspaces \(P,Q\in\mathcal L\) such that \(\CP^d=P+Q\) and every \(H_i\in\mathcal H\) contains either \(P\) or \(Q\). Otherwise \(\mathcal H\) is irreducible. Similarly, \(L\in\mathcal L\) is called reducible or irreducible according to whether the localized arrangement \(\mathcal H_L\) is reducible or irreducible.

We denote by \(\mathcal G\) and \(\mathcal R\) the subsets of irreducible and reducible elements of \(\mathcal L\), respectively. We write \(\mathcal L^k\), \(\mathcal G^k\), and \(\mathcal R^k\) for the subsets of elements of rank \(k\). In rank two there is a simple criterion:
\[
L\in\mathcal G^2 \Longleftrightarrow m_L\geq 3\,,
\qquad
L\in\mathcal R^2 \Longleftrightarrow m_L=2 \,.
\]

\subsection{Stability and the Kempf-Ness Theorem} \label{sec:stab}

We first recall the stability condition for weighted arrangements.
\begin{definition}
We say that \((\mH, a)\) is \emph{stable} if for every non-empty proper element \(L \in \mL\) the following holds:
\begin{equation}\label{eq:stable}
	\frac{1}{r(L)}  \sum_{i \,|\, L \subset H_i} a_i  < \frac{1}{d+1}  \sum_{i=1}^n a_i \,.
\end{equation}    
\end{definition}

\begin{remark}\label{rmk:essirr}
(i)
\((\mH, a)\) is stable if and only if the log pair 
\[
\left(\mathbb{CP}^{d} \ , \ \sum a_i' \, H_i \right) \hspace{2mm} \textup{ with } \hspace{2mm} 
a_i' = \frac{d+1}{\sum_{j=1}^n a_j} \, a_i 
\]
is klt, see \cite[\S 6.2]{borbonpanov}.
(ii) It is easy to show (see \cite[Lemma 2.27]{borbonpanov}) that if \((\mH, a)\) is stable then the arrangement \(\mH\) is essential and irreducible. 
\end{remark}

\begin{definition}
     A Hermitian metric \(\inn\) on \(\C^{d+1}\) is \emph{balanced} for \((\mH, a)\) if
    \begin{equation}\label{eq:balanced}
	\sum_{i=1}^{n} a_i \, P_i = \lambda \, \Id 
	\end{equation} 
    for some \(\lambda > 0\),
	where \(P_i\) is the orthogonal projection onto \(H_i^{\perp}\).
\end{definition}

\begin{remark}
Taking the trace in \eqref{eq:balanced} gives
\begin{equation}\label{eq:lambda}
\lambda=\frac{1}{d+1}\sum_{i=1}^n a_i \,.    
\end{equation}
Thus \(\inn\) is balanced if and only if
\[
\sum_{i=1}^n a_i\left(P_i-\frac{1}{d+1}\Id\right)=0.
\]
From the above equation, it follows that the Hermitian metric \(\inn\) is balanced if and only if the weighted barycentre of the normal lines \(H_i^\perp\) is zero under the moment-map embedding \(\CP^d\hookrightarrow\mathfrak{su}(d+1)^*\)
determined by \(\inn\); see \cite[Example 5.6]{szekelyhidi}.
\end{remark}

The following result is a consequence of the Kempf--Ness theorem.

\begin{proposition}\label{prop:stable}
Suppose that \((\mH,a)\) is stable. Then there exists a Hermitian metric \(\inn\) on \(\C^{d+1}\), unique up to scale, which is balanced for \((\mH,a)\).
\end{proposition}

\begin{proof}
By duality, the hyperplane arrangement \(\mH\) defines a point \(x\in(\CP^d)^n\). 
Suppose first that the weights \(a_i\) are positive integers.
Then the stability condition above is precisely the GIT-stability condition for
the point \(x\), with respect to the diagonal action of \(SL(d+1,\C)\) on
\((\CP^d)^n\) linearized by
\(\bigotimes_{i=1}^n \mathrm{pr}_i^*\mathcal O(a_i)\), see Theorem~11.2
in \cite{dolgachev}.

Fix the standard Hermitian metric \(\inn_0\) on \(\C^{d+1}\), and consider the corresponding moment map for the \(SU(d+1)\)-action on \((\CP^d)^n\) with the above linearization. By the Kempf--Ness theorem \cite[Theorem~5.19]{szekelyhidi}, there exists \(F\in SL(d+1,\C)\) such that \(F\cdot x\) is a zero of this
moment map. 
Since this moment map is the weighted sum of the standard moment maps on the factors, this means that the transformed configuration of normal lines has zero weighted barycentre with respect to \(\inn_0\). Equivalently, transporting the metric \(\inn_0\) back to the original configuration of normal lines gives a Hermitian metric on \(\C^{d+1}\) which is balanced for \((\mH,a)\).

For rational weights the result follows from the integer case just proved.
For arbitrary real weights, the proposition follows from the more general
results proved in \cite{kapovich}, see in particular Example~6.3 of that paper.
\end{proof}

We will also need the following converse result.

\begin{lemma}\label{lem:stable}
    Suppose that there exists a balanced metric \(\inn\) for \((\mH,a)\). Moreover, assume that \(\mH\) is irreducible. Then \((\mH,a)\) is stable.
\end{lemma}

\begin{proof}
    Let \(L \in \mL \setminus \{\emptyset, \CP^d\}\). We regard \(L\) as a vector  subspace of \(\C^{d+1}\) and denote by \(P_{L^{\perp}}\) the orthogonal projection onto \(L^{\perp}\) with respect to \(\inn\). Multiplying Equation \eqref{eq:balanced} on the right by \(P_{L^{\perp}}\), we obtain
    \[
    \sum_{i=1}^n a_i \, (P_i P_{L^{\perp}}) = \lambda \, P_{L^{\perp}} \,.
    \]
    Taking the trace,
    \(\sum_{i=1}^n a_i \, \tr (P_i P_{L^{\perp}}) = \lambda \, r(L)\).
    If \(L \subset H_i\) then \(P_i= P_iP_{L^{\perp}}\), so \(\tr (P_i P_{L^{\perp}}) =1\).
    For any \(i\) we have \(\tr (P_i P_{L^{\perp}}) \geq 0\) with equality if and only if \(L^{\perp} \subset H_i\). Thus,
    \[
    \sum_{i \,\mid\, L \subset H_i} a_i \leq \lambda \, r(L)
    \]
    with equality if and only if every \(H_i \in \mH\) contains either \(L\) or \(L^{\perp}\). Since \(\mH\) is irreducible, the strict inequality must hold. This, together with Equation \eqref{eq:lambda}, implies the strict stability inequality \eqref{eq:stable} as wanted.
\end{proof}

\subsection{The Hirzebruch quadratic form}

For an irreducible codimension-two subspace \(L \in \mG^2\) (see Section \ref{sec:ha}) we denote by \(a_L\) the linear function on \(\R^n\) given by
\begin{equation}\label{eq:aL}
   a_L \, (a_1, \ldots, a_n) = \frac{1}{2} \sum_{i \,|\, L \subset H_i} a_i  \,.
\end{equation}

\begin{definition}[{\cite[Definition 6.1]{borbonpanov}}]\label{def:hqf}
The \emph{Hirzebruch quadratic form} of \(\mH\) is the homogeneous degree-\(2\) polynomial on \(\R^n\) given by
\begin{equation}\label{eq:hir}
Q(a_1, \ldots, a_n) = \sum_{L \in \mG^2} a_L^2 \,-\, \frac{1}{2}  \sum_{i=1}^n B_i \, a_i^2 \,-\, \frac{1}{2(d+1)}  \left(\sum_{i=1}^{n} a_i\right)^2    \,,
\end{equation}
where \(B_i + 1\) is the number of elements \(L \in \mG^2\) with \(L \subset H_i\). 
\end{definition}

\subsection{Tight frames and the frame-potential inequality}\label{sec:tf}

Let \(\inn\) be a Hermitian metric on \(\C^r\) with \(r \geq 1\).

\begin{definition}\label{def:tightframe}
	A finite sequence of vectors \((v_i)_{i=1}^k\) in \(\C^r\)
	is a \emph{tight frame} for \(\C^r\) if for all \(x \in \C^r\),
	\begin{equation}\label{eq:tightframe}
	\sum_{i=1}^{k} \langle x, v_i \rangle v_i = \lambda \, x
	\end{equation}
    for some \(\lambda>0\).
\end{definition}

We recall the frame-potential inequality. For unit vectors, it is a particular instance of the \emph{Welch bounds}.  

\begin{proposition}[{\cite[Theorem 6.1]{waldron}}]\label{prop:tfineq}
	Let \((v_i)_{i=1}^k\) be a finite sequence of vectors in \(\C^r\) with at least one vector non-zero. Then
	\begin{equation}\label{eq:quaedineq}
	\sum_{i=1}^{k} \sum_{j=1}^k \big| \langle v_i, v_j \rangle \big|^2 \geq \frac{1}{r}  \left( \sum_{i=1}^{k} |v_i|^2 \right)^2 \,.
	\end{equation}
	Equality holds if and only if \((v_i)_{i=1}^k\) is a tight frame for \(\C^r\).
\end{proposition}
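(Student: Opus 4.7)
The plan is to analyze the frame operator \(S : \C^d \to \C^d\) defined by \(Sx = \sum_{i=1}^n \langle x, v_i \rangle v_i\) and extract the claimed inequality from Cauchy--Schwarz applied to the Hilbert--Schmidt inner product on \(\End(\C^d)\). Note that \(S\) is Hermitian and positive semi-definite, since \(\langle Sx, x \rangle = \sum_i |\langle x, v_i \rangle|^2 \geq 0\). The tight frame condition \eqref{eq:tightframe} is exactly the statement \(S = \lambda \cdot \Id\) for some \(\lambda > 0\).

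The first step is to compute the two traces of interest in terms of the data \((v_i)\). Writing \(S = \sum_i v_i v_i^*\) in bra-ket notation, linearity of trace gives
\[
\tr S = \sum_{i=1}^n \tr(v_i v_i^*) = \sum_{i=1}^n |v_i|^2 \,.
\]
For \(\tr(S^2)\), expanding \(S^2 = \sum_{i,j} v_i v_i^* v_j v_j^*\) and using the cyclic property \(\tr(v_i v_i^* v_j v_j^*) = (v_i^* v_j)(v_j^* v_i) = |\langle v_i, v_j \rangle|^2\), one obtains
\[
\tr(S^2) = \sum_{i=1}^n \sum_{j=1}^n |\langle v_i, v_j \rangle|^2 \,.
\]

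The second step is to apply the Cauchy--Schwarz inequality on the space of Hermitian endomorphisms of \(\C^d\) equipped with the inner product \((A,B) \mapsto \tr(AB)\). Taking \(A = \Id\) and \(B = S\) gives
\[
(\tr S)^2 = \big(\tr(\Id \cdot S)\big)^2 \leq \tr(\Id^2) \cdot \tr(S^2) = d \cdot \tr(S^2) \,,
\]
which, after substituting the identities from the previous step, is exactly \eqref{eq:quaedineq}.

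Finally, the equality clause follows from the equality case of Cauchy--Schwarz: \((\tr S)^2 = d \cdot \tr(S^2)\) iff \(S\) is a scalar multiple of \(\Id\). If all \(v_i=0\) then both sides of \eqref{eq:quaedineq} vanish trivially; otherwise \(S\) is non-zero and positive semi-definite, so equality forces \(S = \lambda \cdot \Id\) with \(\lambda > 0\), which is precisely the tight frame property. I do not expect any significant obstacle here; this is the standard textbook argument, and the only mild subtlety is the degenerate case \(v_i \equiv 0\), which is handled by inspection.
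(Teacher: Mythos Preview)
Your argument is correct and is the standard one: compute \(\tr S\) and \(\tr(S^2)\) for the frame operator \(S=\sum_i v_i v_i^*\), then apply Cauchy--Schwarz in the Hilbert--Schmidt inner product. The paper does not actually supply a proof of this proposition; it simply cites \cite[Theorem 6.1]{waldron}, whose proof is essentially the one you wrote. So you have filled in what the paper leaves as a reference.

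One minor caveat on the equality clause. You flag the degenerate case \(v_i\equiv 0\) as ``handled by inspection'', but inspection in fact shows that the proposition's biconditional is literally false there: both sides of \eqref{eq:quaedineq} vanish, yet by Definition~\ref{def:tightframe} the zero sequence is \emph{not} a tight frame (the identity \(0=\lambda x\) cannot hold for all \(x\) with \(\lambda>0\)). This is a harmless defect of the stated proposition rather than of your proof, and it is irrelevant to the paper's applications, where the vectors satisfy \(|v_i|^2=a_i>0\). If you want the equality characterization to be exactly as stated, simply add the hypothesis that not all \(v_i\) are zero; otherwise your argument already establishes the inequality in full generality and the equality case whenever at least one \(v_i\neq 0\).
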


Finally, we note the straightforward relation between balanced Hermitian metrics and tight frames.

\begin{lemma}\label{lem:tfbalanced}
    Let \(\inn\) be a Hermitian metric on \(\C^{d+1}\).
    For each hyperplane \(H_i \in \mH\), choose a vector \(v_i \in \C^{d+1}\) such that \(v_i \in H_i^{\perp}\) and \(|v_i|^2=a_i\). Then the following are equivalent:
    \begin{enumerate}[label=\textup{(\roman*)}]
        \item \(\inn\) is balanced for \((\mH, a)\);
        \item \((v_i)_{i=1}^n\) is a tight frame for \(\C^{d+1}\).
    \end{enumerate}
\end{lemma}

\begin{proof}
    By definition of the vectors \(v_i\), for any \(x \in \C^{d+1}\), we have
    \[
    \langle x, v_i \rangle \, v_i = a_i P_i(x) \,,
    \]
    where \(P_i\) denotes the orthogonal projection onto \(H_i^{\perp}\) with respect to \(\inn\). Thus Equations \eqref{eq:balanced} and \eqref{eq:tightframe} are equivalent, so (i) \(\iff\) (ii).
\end{proof}

\subsection{Flat logarithmic connections and rank-two balanced metrics}

The following lemma is well-known; for a proof see
\cite[Proposition 3.8]{borbonpanov-pkc}.

\begin{lemma}\label{lem:flat}
    Let \(E\) be a complex vector space and let \(\nabla\) be the logarithmic connection on the trivial vector bundle over \(\C^{d+1}\) with fibre \(E\) given by
    \begin{equation}\label{eq:connection2}
       \nabla = d - \sum_{i=1}^n A_i \frac{d\ell_i}{\ell_i} 
    \end{equation}
    with \(A_i \in \End(E)\) and \(\ell_i\) defining linear equations of the hyperplanes \(H_i \in \mH\). Then the following are equivalent:
    \begin{enumerate}[label=\textup{(\roman*)}]
        \item the connection \(\nabla\) is flat;
        \item for every \(L \in \mL^2\), and every \(i\) such that \(L\subset H_i\), we have
        \begin{equation}\label{eq:kohno}
            \left[A_i\,,\sum_{j\,:\,L\subset H_j}A_j\right]=0 \,.
        \end{equation}
    \end{enumerate}
\end{lemma}

\begin{remark}
The commutation relations given by Equation~\eqref{eq:kohno} are known as
the \emph{Kohno relations}, after Kohno \cite{Kohno89}.    
\end{remark}

We now specialize to the case where the residues of the logarithmic connection are weighted orthogonal projections onto the normal lines to the hyperplanes.
Thus, after choosing a Hermitian metric \(\inn\) on \(\C^{d+1}\), we set \(A_i=a_i \,P_i\), where \(P_i\) is the orthogonal projection onto \(H_i^\perp\). In this case Equation~\eqref{eq:connection2}, with \(E=\C^{d+1}\),
defines a logarithmic connection on \(T\C^{d+1}\), written in the standard trivialization defined by the coordinate vector fields.

\begin{remark}
For residues of this form, the logarithmic connection \(\nabla\) is always torsion-free, since \(H_i=\ker A_i\); see \cite[Lemma 3.14]{borbonpanov-pkc}.
\end{remark}

We next characterize the Hermitian metrics for which the weighted orthogonal projections \(A_i = a_i \,P_i\) satisfy the Kohno relations.
Let \(L \in \mL\). Once a Hermitian metric is fixed, we identify the quotient \(\C^{d+1}/L\) with \(L^\perp\) and define
\[
\mH_L/L :=\{H_i\cap L^\perp \mid L\subset H_i\},
\]
thought of as an arrangement of linear hyperplanes in \(L^\perp\). We denote by \(a|_L\) the restriction of the weight vector \(a\) to the subarrangement \(\mH_L\subset\mH\).

\begin{definition}\label{def:ranktwo}
Let \(\inn\) be a Hermitian metric on \(\C^{d+1}\). We say that \(\inn\) is
\emph{rank-two balanced} for \((\mH,a)\) if the following conditions hold for
every \(L\in\mL^2\):
\begin{enumerate}[label=\textup{(\arabic*)}]
\item if \(L\in\mG^2\), then the induced Hermitian metric on \(L^\perp\) is
balanced for the weighted arrangement \((\mH_L/L,a|_L)\);
\item if \(L\in\mR^2\), say \(\mH_L=\{H_i,H_j\}\), then the normal lines
\(H_i^\perp\) and \(H_j^\perp\) are orthogonal.
\end{enumerate}
\end{definition}

\begin{remark}
In the real Euclidean setting, closely related rank-two balancing conditions
appear in Veselov's theory of \(\vee\)-systems; see
\cite{FeiginVeselov}. 
\end{remark}

\begin{lemma}\label{lem:ranktwokohno}
Let \(\inn\) be a Hermitian metric on \(\C^{d+1}\), and let \(A_i=a_iP_i\), where \(P_i\) is the orthogonal projection onto \(H_i^\perp\). Then \(\inn\) is rank-two balanced for \((\mH,a)\) if and only if the
endomorphisms \(A_i\) satisfy the Kohno relations \eqref{eq:kohno}.
\end{lemma}

\begin{proof}
Let \(L\in\mL^2\) and set
\[
S_L=\sum_{i\,:\,L\subset H_i} A_i \,.
\]
The Kohno relations at \(L\) say that \([A_i,S_L]=0\) for every \(i\) such that \(L\subset H_i\).

Suppose first that \(L\in\mG^2\). Then \(m_L\geq 3\). The relations \([A_i,S_L]=0\) imply that the restriction of \(S_L\) to \(L^{\perp}\) preserves the normal lines \(H_i^\perp\subset L^\perp\), for all \(L\subset H_i\). Since an endomorphism of a
two-dimensional complex vector space preserving three distinct lines is a scalar multiple of the identity, we obtain \(S_L=\lambda P_{L^\perp}\) for some \(\lambda>0\). 
Conversely, if \(S_L=\lambda P_{L^\perp}\), then \(S_L\) commutes with each \(A_i\) such that \(L\subset H_i\), and hence the Kohno relations at \(L\) hold. Thus the Kohno relations at \(L\) are equivalent to the balanced condition for \((\mH_L/L,a|_L)\).

Suppose now that \(L\in\mR^2\), say \(\mH_L=\{H_i,H_j\}\). Then the Kohno
relations at \(L\) reduce to \([A_i,A_j]=0\), which is equivalent to the
orthogonality of the two normal lines \(H_i^\perp\) and \(H_j^\perp\).
\end{proof}

\begin{lemma}\label{lem:tfkohno}
    Let \(\inn\) be a Hermitian metric on \(\C^{d+1}\). For each hyperplane
    \(H_i \in \mH\), choose a vector \(v_i \in H_i^{\perp}\) such that
    \(|v_i|^2=a_i\). Then the following conditions are equivalent:
    \begin{enumerate}[label=\textup{(\roman*)}]
        \item \(\inn\) is rank-two balanced for \((\mH,a)\);
        \item for every \(L \in \mL^2\) we have:
        \begin{enumerate}[label=\textup{(\arabic*)}]
            \item if \(L \in \mG^2\), then the vectors \(\{v_i \mid L \subset H_i\}\)
            form a tight frame for \(L^{\perp}\);
            \item if \(L \in \mR^2\), say \(\mH_L=\{H_i,H_j\}\), then \(\langle v_i,v_j\rangle=0\).
        \end{enumerate}
    \end{enumerate}
\end{lemma}

\begin{proof}
This follows directly from 
Lemmas \ref{lem:tfbalanced} and \ref{lem:ranktwokohno}.
\end{proof}

Finally, we show that, under the mild assumption that the arrangement is
essential and irreducible, rank-two balanced implies balanced.

\begin{lemma}\label{lem:kohnotf}
    Assume that \(\mH\) is essential and irreducible and suppose that \(\inn\)
    is rank-two balanced for \((\mH,a)\). Then \(\inn\) is balanced for
    \((\mH,a)\).
\end{lemma}

\begin{proof}
By Lemma~\ref{lem:ranktwokohno}, the endomorphisms \(A_i=a_iP_i\) satisfy the Kohno relations. Let \(S=\sum_{i=1}^n a_iP_i\). It is easy to show (see \cite[Lemma 3.9]{borbonpanov-pkc}) that the Kohno
relations imply \([S,P_i]=0\) for all \(i\). In particular, the linear
endomorphism \(S\) preserves all the hyperplanes \(H_i\in\mH\). Since \(\mH\)
is essential and irreducible, by \cite[Lemma 2.33]{borbonpanov-pkc} the
endomorphism \(S\) is a scalar multiple of the identity. Since \(S\) has positive
trace, this scalar is positive, and hence \(\inn\) is balanced for \((\mH,a)\).
\end{proof}

\section{Proof of Theorem~A}\label{sec:proof}

Throughout this section, we assume that \((\mH, a)\) is stable and we let \(Q\) be the Hirzebruch quadratic form of \(\mH\).

\subsection{The Hirzebruch quadratic form is non-positive}

\begin{lemma}\label{lem:hirineq}
    The inequality \eqref{eq:hirineq} holds.
\end{lemma}

\begin{proof}
    Since  \((\mH, a)\) is stable, by Proposition \ref{prop:stable} we can choose a balanced metric for \((\mH,a)\). For each \(1 \leq i \leq n\) choose a vector \(v_i \in H_i^{\perp}\) with \(|v_i|^2 = a_i\). By Lemma \ref{lem:tfbalanced} the sequence of vectors \((v_i)_{i=1}^n\) is a tight frame for \(\C^{d+1}\). By Proposition \ref{prop:tfineq} (with \(r=d+1\)) we have
	
\begin{equation}\label{eq:hirpf1}
	\frac{1}{d+1}  \left( \sum_{i=1}^{n} a_i \right)^2 =
	\sum_{i=1}^{n} \sum_{j=1}^{n} \big| \langle v_i, v_j \rangle\big|^2 \,.	
\end{equation}
	
The assumption that the hyperplanes in \(\mH\) are pairwise distinct implies that if \(i \neq j\) and \(L=H_i \cap H_j\) then either \(L \in \mR^2\) or  \(L \in \mG^2\) (see Section \ref{sec:ha} for notation). Let \(R\) be the subset of pairs \((i,j)\) such that \(H_i \cap H_j \in \mR^2\). 
For \(L \in \mG^2\) let \(I(L)\) be the subset of all indices \(i \in \{1, \ldots, n\}\) such that \(L \subset H_i\).
Then,
	\begin{equation}\label{eq:hirpf2}
	\begin{gathered}
	\sum_{i=1}^{n} \sum_{j=1}^{n}
	\big| \langle v_i, v_j \rangle\big|^2 = \sum_{(i,j) \in R}  \big|\langle v_i, v_j \rangle\big|^2 \\
	\,+ \, \sum_{L \in \mG^2} \sum_{\,\,i,j \in I(L)} \big|\langle v_i, v_j \rangle\big|^2 \,-\, \sum_{i=1}^{n} B_i \, a_i^2 \,,
	\end{gathered}	
	\end{equation}
where \(B_i + 1\) is the number of elements \(L \in \mG^2\) with \(L \subset H_i\). 

For \(L \in \mG^2\), Proposition \ref{prop:tfineq} (with \(r=2\)) applied to the sequence of vectors \((v_i)_{i \in I(L)}\) in \(L^{\perp}\) gives
	\begin{equation}\label{eq:hirpf3}
	\sum_{i, j \in I(L)} \big|\langle v_i, v_j \rangle\big|^2 \geq \frac{1}{2} \left(\sum_{i \in I(L)} a_i\right)^2 = 2 \, a_L^2 \,.
	\end{equation}

Equations \eqref{eq:hirpf1}, \eqref{eq:hirpf2} and \eqref{eq:hirpf3} imply 
	\[
	\frac{1}{d+1}  \left( \sum_{i=1}^{n} a_i \right)^2 \geq 2 \, \sum_{L \in \mG^2} a_{L}^2 \,-\, \sum_{i=1}^{n} B_i \, a_i^2 \,,
	\]
which rearranges to inequality \eqref{eq:hirineq}.
\end{proof}

\subsection{The equality case}

\begin{lemma}\label{lem:qa0}
    Let \(\inn\) be a balanced metric for \((\mH, a)\). Then
    \(Q(a)=0\) if and only if
    \(\inn\) is rank-two balanced for \((\mH, a)\).
\end{lemma}

\begin{proof}
The proof of Lemma~\ref{lem:hirineq} shows that \(Q(a)=0\) if and only if
equality holds in all the frame-potential inequalities \eqref{eq:hirpf3} for
\(L\in\mG^2\), and all the terms \(|\langle v_i,v_j\rangle|^2\) with
\(H_i\cap H_j\in\mR^2\) vanish. By Proposition \ref{prop:tfineq} and Lemma~\ref{lem:tfkohno}, this is
equivalent to \(\inn\) being rank-two balanced for \((\mH,a)\).
\end{proof}

\begin{lemma}\label{lem:qa02}
    Suppose that there exists a rank-two balanced metric \(\inn'\) for \((\mH,a)\). Then \(Q(a)=0\).
\end{lemma}

\begin{proof}
    The arrangement \(\mH\) is essential and irreducible, see Remark~\ref{rmk:essirr} (ii).
    Therefore, by Lemma \ref{lem:kohnotf} 
    the metric \(\inn'\) is balanced for \((\mH, a)\). By Lemma~\ref{lem:qa0}, \(Q(a)=0\).
\end{proof}

We can now prove the main result of the paper.

\begin{proof}[Proof of Theorem~A]
    The inequality \eqref{eq:hirineq} follows from Lemma \ref{lem:hirineq}. For the equality case,
    by Lemmas \ref{lem:flat} and \ref{lem:ranktwokohno}, it is enough to show that \(Q(a)=0\) if and only if there is a rank-two balanced metric for \((\mH,a)\).
    In one direction,
    if \(Q(a)=0\) then by Lemma \ref{lem:qa0} the balanced metric \(\inn\) is rank-two balanced for \((\mH, a)\). In the other direction, if there exists a rank-two balanced metric for \((\mH, a)\), then by Lemma~\ref{lem:qa02} \(Q(a)=0\). 
    The uniqueness up to scale of the rank-two balanced metric follows from Lemma \ref{lem:kohnotf} together with the uniqueness part of Proposition \ref{prop:stable}.
\end{proof}

\section{Applications}\label{sec:applications}

We record three consequences of Theorem~A, illustrating how the result can be translated into concrete arrangement-theoretic information.

\subsection{Langer's bound on multiplicities}

We consider the symmetric case where all weights are equal. 
Stability becomes a purely combinatorial condition on the multiplicities of the intersections of \(\mathcal H\), and Theorem~A then gives a lower bound for the total multiplicity of the codimension-two strata. 

\begin{corollary}\label{cor:langer}
    Let \(\mH = \{H_1, \ldots, H_n\}\) be a hyperplane arrangement in \(\CP^d\) consisting of \(n > d+1\) hyperplanes. Suppose that for every element \(L\) of the intersection poset \(\mL\) of \(\mH\) with \(L \neq \emptyset, \CP^d\) we have
    \begin{equation}\label{eq:mL}
    m_L < \frac{r(L)}{d+1} \, n \,.    
    \end{equation}
    Then the following inequality holds:
    \begin{equation}\label{eq:mulbound}
        \sum_{L \in \mL^2} m_L \geq \left(1 - \frac{2}{d+1} \right)n^2 + n \,.
    \end{equation}
    Equality holds if and only if every hyperplane \(H_i \in \mH\) intersects \(\mH \setminus \{H_i\}\) along \((1-2/(d+1)) n + 1\) codimension-two subspaces. Furthermore, equality holds if and only if there exists a rank-two balanced metric for \((\mH, \mathbf{1})\), where \(\mathbf{1}\) is the vector in \(\R^n\) with all components equal to \(1\).
\end{corollary}

\begin{proof}
    The assumption \eqref{eq:mL} implies that \((\mH,\mathbf{1})\) is stable. The inequality \eqref{eq:mulbound} and the first equality characterization follow as in the proof of \cite[Theorem 7.14]{borbonpanov}. The second equality characterization follows from Theorem~A.
\end{proof}

\begin{remark}
    When \(d=2\), the bound \eqref{eq:mulbound} was derived in \cite[Proposition 11.3.1]{langer} from a more general Miyaoka--Yau inequality for log canonical surface pairs and in \cite[Corollary 7.9]{panov} from the Bogomolov--Gieseker inequality for stable parabolic bundles. The proof presented here is more elementary.
\end{remark}

\subsection{A matroid-polytope criterion}\label{sec:linearconst}

The matroid polytope, or base polytope, of an essential and irreducible arrangement \(\mathcal H = \{H_1, \ldots, H_n\}\) is the convex hull \(B\subset\R^n\) of the indicator functions of subsets \(I\subset\{1,\ldots,n\}\) such that \(|I|=d+1\) and the hyperplanes \(\{H_i\mid i\in I\}\) are linearly independent.\footnote{
Matroid polytopes play a central role in the theory of weighted stable hyperplane arrangements; see, for example, Alexeev's book \cite{alexeev2015}.
}
Under our assumptions, \(\dim B=n-1\); see \cite[Theorem 1.12.9]{borovikgelfandwhite}.

The next corollary turns the existence problem for rank-two balanced metrics into an explicit intersection problem between the relative interior of the matroid polytope and the kernel of the Hirzebruch quadratic form.

\begin{corollary}\label{cor:matroid}
    Let \(\mathcal H\) be essential and irreducible, and let \(a\in\R^n_{>0}\). Then the following conditions are equivalent.
	\begin{enumerate}[label=\textup{(\arabic*)}]
        \item There exists a rank-two balanced metric for \((\mH, a)\).
        \item \((\mH, a)\) is stable and \(Q(a)=0\).
        \item If \(s= \sum_{i=1}^n a_i\) then
        \[
        \frac{d+1}{s} \, a \in B^{\circ} \cap \ker Q \,,
        \]
        where \(\ker Q\) denotes the kernel of the symmetric bilinear form associated with \(Q\) and \(B^{\circ}\) is the relative interior of \(B\) inside the affine hyperplane \(\{s=d+1\} \subset \R^n\).
	\end{enumerate}
\end{corollary}

\begin{proof}
    (1) \(\implies\) (2). The fact that \((\mH, a)\) is stable follows from Lemmas \ref{lem:kohnotf} and \ref{lem:stable}.
    Since \((\mH, a)\) is stable and \(\inn\) is a rank-two balanced metric for \((\mH, a)\), by Theorem~A, \(Q(a) = 0\). \vspace{.5em}

    (2) \(\implies\) (1). This is part of Theorem~A. \vspace{.5em}

    (2) \(\iff\) (3). Let \(\mathcal C=\R_{>0}\cdot B^\circ\). This is precisely the open cone of weights \(a\) for which \((\mathcal H,a)\) is stable; see \cite[\S 6.3]{borbonpanov}. By Theorem~A, \(Q\leq 0\) on \(\mathcal C\). Since \(\mathcal C\) is open, any point \(a\in\mathcal C\) with \(Q(a)=0\) is a local maximum of \(Q\), hence a critical point. Since \(Q\) is quadratic, this is equivalent to \(a\in\ker Q\). Finally, if \(s=\sum_i a_i\), then \(a\in\mathcal C\) is equivalent to \(\frac{d+1}{s}a\in B^\circ\).
    This proves the equivalence of (2) and (3).
\end{proof}

\subsection{Localization in the vanishing case}

Polyhedral K\"ahler metrics are our main motivation for studying the Hirzebruch quadratic form. In \cite{borbonpanov-pk}, the condition \(Q(a)=0\) appears as the numerical equality condition for the existence problem on \(\CP^d\). 
The following corollary is the form in which Theorem~A is used there: it shows that the equality condition is inherited by the irreducible localizations of the arrangement.

\begin{corollary}\label{cor:loc}
    Suppose that \((\mH, a)\) is stable and that \(Q(a)=0\). Let \(L \in \mL\) be an irreducible subspace. Let \(\mH_L /L = \{H_i/L \,\mid\, L \subset H_i\}\) be the essentialization of the localized arrangement and let \(a|_L \in \R^{\mH_L}\) be the vector with components \(a_i\) if \(L \subset H_i\). Then the following holds:
    \begin{enumerate}[label=\textup{(\roman*)}]
        \item the weighted arrangement \((\mH_L/L, a|_L)\) is stable;
        \item \(Q_L (a|_L) = 0\), where \(Q_L\) is the Hirzebruch quadratic form of \(\mH_L/L\).
    \end{enumerate}
\end{corollary}

\begin{proof}
    By Theorem~A, there is a rank-two balanced metric \(\inn\) for \((\mH,a)\). It is straightforward to check that the restriction of \(\inn\) to \(L^\perp\) is a rank-two balanced metric for \((\mH_L/L,a|_L)\). The result then follows from the direction (1) \(\implies\) (2) of Corollary \ref{cor:matroid}.
\end{proof}

\bibliographystyle{alpha}
\bibliography{refs}


\end{document}